\renewcommand{\iff}{\Leftrightarrow}
\newcommand{\ox}{\otimes}
\newcommand{\<}{\langle}
\renewcommand{\>}{\rangle}
\DeclareMathOperator*{\lperp}{\Huge \textrm {$\perp$}}
\newcommand{\vf}{\varphi}
\newcommand{\gL}{\Lambda}
\newcommand{\Kd}{{K^{\times}}}
\newcommand{\Kdt}{{\Kd^{2}}}
\newcommand{\ohm}{\omega}
\newcommand{\HP}{\mathbb{H}}
\newcommand{\nlo}{\frac{n-1}{2}}
\newcommand{\nslo}{\frac{n^2-1}{2}}
\newcommand{\nslfo}{\frac{n^2-4}{2}}
\newcommand{\nslno}{\frac{n^2-n}{2}}
\newcommand{\kpow}{\gL^k}
\newcommand{\hyp}{\mathrm {Hyp}}
\newcommand{\lb}{\left(}
\newcommand{\rb}{\right)}
\newtheorem{lemma}{Lemma}[section]
\newtheorem{propo}[lemma]{Proposition}
\newtheorem{coro}[lemma]{Corollary}
\theoremstyle{definition}
\newtheorem{defi}[lemma]{Definition}
\newtheorem{remark}[lemma]{Remark}
\newtheorem{example}[lemma]{Example}
\title{Trace forms of Symbol Algebras}
\author{Ronan Flatley}
\date{November 27, 2008}             
\keywords{symbol algebra, trace form, exterior power}
\subjclass[2000]{Primary: 11E81; Secondary: 16K20}
\address{School of Mathematical Sciences, University College Dublin, Belfield, Dublin~4, Ireland}
\email{ronan.flatley@ucd.ie}
\begin{document}
\maketitle

\begin{abstract} Let $S$ be a symbol algebra. The trace form of $S$ is computed
 and it is shown how this form can be used to determine whether $S$ is a division algebra or not. In addition, the exterior powers of the trace form of $S$ are computed.
 \end{abstract}

\section{Introduction}

Let $n$ be an arbitrary positive integer and let $K$ be a field containing a primitive $n$-th root of unity $\omega$. Unless stated otherwise, we assume throughout this paper that ${\rm char}(K)$ is different from $2$ and does not divide $n$. Let $\Kd=K\backslash\{0\}$. Let $a$,$b \in$ $\Kd$ and 
 let $S$ be the algebra over $K$ generated by elements $x$ 
 and $y$ where $$x^n = a, \quad y^n=b \quad\text{ and } \quad yx=\omega x y.$$ 
 We call this algebra a {\it symbol algebra} (see \cite[Chapter $1$, \S$2$]{KMRT}). Note that in \cite[\S11]{D}, Draxl calls such an algebra a {\it power norm residue algebra}, denoting it as $(a, b; n, K, \omega)$ and shows it to be a central simple algebra over $K$ of degree $n$. Quaternion algebras are the symbol algebras of degree $2$.

\par Let $A$ be a central simple algebra of degree $n$ over a field $K$ of characteristic different from $2$. We write $T_A \colon A \rightarrow K$ for the quadratic trace form $$T_A(z)={\text {Trd}}_A(z^2) {\quad\text{for }} z \in A,$$ where Trd$_A$ is the reduced trace of $A$. The main purpose of this paper is to compute the trace form of a symbol algebra $S$ and to show how the form determines if the algebra is division.

\par Notation and terminology is borrowed from Lam's book \cite{TYL} and Scharlau's book \cite{S}. A diagonalised quadratic form over $K$ with coefficients $a_1, \dots, a_m \in \Kd$ is denoted by $\<a_1, \dots, a_m\>$. The hyperbolic plane $\<1, -1\>$ is denoted by $\HP$.
If $\vf$ and $\psi$ are forms over $K$ then $\vf \simeq \psi$ means that these forms are isometric. 
The Witt index of $\vf$ is denoted by $i_W(\vf)$ and the anisotropic part of 
$\vf$ by $\vf_{an}$, so that we have $\vf \simeq \vf_{an}\perp \lb i \times \mathbb {H} \rb$ where $i=i_W(\vf)$. The tensor product of the $1$-dimensional form $\<n\>$ with $\vf$ is denoted $\<n\>\vf$ to be distinguished from $n$ copies of $\vf$ which is written as $n \times \vf$.

\section{Symbol algebras and their trace forms}\label{sec.s.a.}

Let $S$ be the symbol algebra $(a,b; K,n, \omega)$ with basis $\{x^iy^j\}$, $0 \le i, j \le n-1$.

\begin{propo}\label{P1}
We have
\begin{align}
\text{(i)\quad} T_S &\simeq \<n\> \perp \nslo\times\HP &&\text{ for } n \text{ odd.}\notag\\
\text{(ii)\quad}T_S &\simeq \<n\>\<1,a,b,(-1)^{n/2}ab\> \perp \nslfo \times \HP &&\text{ for } n \text{ even.}\notag
\end{align}
\end{propo}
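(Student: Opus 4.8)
The plan is to compute $T_S$ directly from the multiplication rule $yx = \omega xy$ acting on the basis $\{x^i y^j\}$. First I would set up a reduced trace computation: for a central simple algebra of degree $n$, the reduced trace of an element equals $\tfrac{1}{n}$ times its trace in the regular representation, so I would work out $\Trd_S(x^p y^q)$ for each basis element. The key observation is that left multiplication by $x^p y^q$ permutes the basis $\{x^i y^j\}$ up to scalar factors (powers of $\omega$, $a$, $b$), and it fixes no basis vector unless both exponents are multiples of $n$, i.e. unless $p \equiv q \equiv 0 \pmod n$. Hence $\Trd_S(x^p y^q) = 0$ except when $p = q = 0$, where it equals $n$ (after normalizing $\Trd_S(1) = n$).

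Next I would compute the Gram matrix of $T_S$ in the basis $\{x^i y^j\}$ by evaluating $b_S(u,v) = \Trd_S(uv)$ on pairs of basis elements, where $(x^i y^j)(x^k y^\ell) = \omega^{jk} x^{i+k} y^{j+\ell}$. By the vanishing observation, $\Trd_S$ of this product is nonzero precisely when $i+k \equiv 0$ and $j+\ell \equiv 0 \pmod n$, so the only pairings that survive are between $x^i y^j$ and $x^{n-i} y^{n-j}$ (with the conventions $x^n = a$, $y^n = b$). This shows the Gram matrix is a (generalized) permutation matrix pairing each basis vector with its ``inverse'' partner, and the resulting $2\times 2$ blocks can be diagonalized. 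I would track the scalar $\omega^{jk}a^{\lceil(i+k)/n\rceil}b^{\lceil(j+\ell)/n\rceil}$ carefully: each off-diagonal block contributes a binary form whose discriminant records the relevant power of $a$, $b$, and $\omega$.

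The diagonal term comes from the single self-paired vector $1 = x^0 y^0$, contributing $\<n\>$, matching the leading term in both cases. For the remaining $n^2 - 1$ basis vectors, they pair off into blocks. The main obstacle — and the point where the odd/even dichotomy enters — is the bookkeeping of which binary blocks are hyperbolic and which are anisotropic. When $n$ is odd, every nonzero $(i,j)$ pairs with a distinct partner $(n-i, n-j)$ and each such binary block turns out to represent a hyperbolic plane $\HP$, yielding $\nslo \times \HP$. When $n$ is even, the vectors with $i, j \in \{0, n/2\}$ (other than the origin) are either self-paired or pair among themselves in a way that produces the anisotropic form $\<1, a, b, (-1)^{n/2}ab\>$ (scaled by $\<n\>$) rather than hyperbolic planes, with the sign $(-1)^{n/2}$ emerging from $\omega^{n/2} = -1$; the remaining $\nslfo$ blocks are hyperbolic.

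The hard part will be verifying precisely which blocks collapse to $\HP$ versus assemble into the four-dimensional anisotropic form, and matching the power of $\omega$ to the sign $(-1)^{n/2}$. I would handle this by computing the discriminant and Hasse invariant of each candidate binary form $\<c, c'\>$ arising from a pairing: a block is hyperbolic exactly when its discriminant is $-1 \pmod{\Kdt}$, which I expect to hold automatically whenever the partner $(n-i, n-j) \neq (i,j)$ and no factor of $a$ or $b$ intervenes, i.e. whenever $i \notin \{0\}$ or $j \notin \{0\}$ in a suitable sense. The delicate cases are exactly the half-integer points forced by $n$ even, and I would treat those by hand, using $\omega^{n/2} = -1$ to pin down the signs.
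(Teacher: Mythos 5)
Your proposal is correct and follows essentially the same route as the paper: compute the Gram matrix of $\Trd_S(uv)$ on the monomial basis, note that $\Trd_S(x^py^q)$ vanishes unless $p\equiv q\equiv 0\pmod n$, and pair each $x^iy^j$ with $x^{n-i}y^{n-j}$, the self-paired monomials $1,x^{n/2},y^{n/2},x^{n/2}y^{n/2}$ supplying the diagonal part $\<n\>\<1,a,b,(-1)^{n/2}ab\>$. The only remark worth making is that your anticipated ``hard part'' is immediate: every non-self-paired block has zero diagonal entries (since $(x^iy^j)^2$ is a scalar times $x^{2i}y^{2j}$ with $(2i,2j)\not\equiv(0,0)$), so it is $\left(\begin{smallmatrix}0&c\\ c&0\end{smallmatrix}\right)$ with $c\neq 0$, whose discriminant is $-c^2\equiv -1$ regardless of the powers of $a$, $b$, $\omega$ in $c$, hence automatically a hyperbolic plane.
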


\begin{proof} 
Let $\phi_{T_S}$ be the symmetric bilinear form associated with $T_S$. Consider $\{x^iy^j\}$, $0 \le i,j \le n-1$, the set of $n^2$ basis elements of $S$. Consider the left regular matrix representation of each such element under the isomorphism used in the definition of the trace map. Easy, but tedious arguments, and switching from the trace to the reduced trace show that
$$\phi_{T_S}(x^iy^j, x^{i}y^{j})=\text {Trd}_S((x^iy^j)^2) = 0$$
unless we have of the following cases:
\begin{align}
i&=j=0, &&\text{in which case the reduced trace is $n$} ; \notag\\
i&=0 \text{ and } j=\frac{n}{2}, &&\text{in which case the reduced trace is $nb$};\notag\\
i&=\frac{n}{2} \text{ and } j=0, &&\text{in which case the reduced trace is $na$};\notag\\
i&=\frac{n}{2} \text{ and } j=\frac{n}{2}, &&\text{in which case the reduced trace is $(-1)^\frac{n}{2}nab$}.\notag
\end{align}
 Clearly, the latter three cases only arise for $n$ even. \\
\par{\it(i)} Le $i=j=0$. Then $\phi_{T_S}(1,1)=n$, as mentioned above. Now let $1 \le i, j \le n-1$. There are $\nslo$ pairs $(x^iy^j, x^{n-i}y^{n-j})$ and we have
$$\phi_{T_S}(x^iy^j, x^{n-i}y^{n-j})=\text {Trd}_S(x^iy^jx^{n-i}y^{n-j}) = \text {Trd}_S(\omega^{j(n-i)}x^ny^n) 
= n\omega^{-ij}ab.$$
Each pair contributes a $2\times2$ block in the Gram matrix of $\phi_{T_S}$, as follows: \begin{table}[h]
     \begin{center}
	   \begin{tabular}{c|cc}
	  $\phi_{T_S}(-,-)$      	& $x^iy^j$ & $x^{n-i}y^{n-j}$\\ \hline
	$x^iy^j$ & $0$		   & $n\omega^{-ij}ab$\\
       $x^{n-i}y^{n-j}$ & $n\omega^{-ij}ab$  & $0$\\ 
       \end{tabular}
    \end{center}
\end{table}       \\
Each such block corresponds to a $2$-dimensional hyperbolic plane which is a direct summand of $T_S$. The Gram matrix will have exactly one non-zero entry in each row.
Hence $T_S \simeq \<n\> \perp \nslo\times\HP$.	  
\par{\it(ii)} The subset $\{ 1 , x^{n/2} , y^{n/2} , x^{n/2}y^{n/2} \}$ of basis elements of $S$ gives rise to the quadratic form $\<n\>\< 1 , a , b , (-1)^{n/2} ab \>$. By placing the other $n^2-4$ basis elements into ordered pairs of the form $( x^i y^j , x^{n-i} y^{n-j})$, we get that $\phi_{T_S}$ maps each pair to $n\ohm^{-ij}ab$ as seen in the proof of {\it(i)}. So we get $\nslfo$ hyperbolic planes as direct summands. Hence $T_S \simeq \<n\>\<1,a,b,(-1)^{n/2}ab\> \perp \nslfo \times \HP$.	
\end{proof}

By way of example, the matrix of $T_S$ when $n=3$ is as follows:

\begin{equation*}
     \left[
     \begin{smallmatrix}
          3	&	&	&	&	&	&	&	&	\\
            	& 0  	& 3a   &   	&   	&   	&   	&  	& 	\\
            	& 3a  	& 0   	&   	&   	&   	&  	&  	&	\\
            	&   	&    	& 0  	& 3b  	&   	&   	&  	&	\\
            	&   	&    	& 3b 	&  0	&   	&   	&  	&	\\
            	&   	&    	&   	&   	& 0  	&3\ohm^2ab&  	&	\\
            	&   	&  	&   	&   	&3\ohm^2ab& 0  	&  	&	\\
            	&   	&    	&   	&   	&   	&	& 0	& 3\ohm ab	&\\
            	&   	&    	&   	&   	&   	&	&3 \ohm ab 	&  0	
     \end{smallmatrix}
     \right].
\end{equation*}
The matrix is computed for the basis elements $\{1 , x , x^2 , y , y^2 , xy , x^2y^2 , x^2y , xy^2\}$ and each blank entry in the matrix is zero.

\section{Further results when $\deg_KS$ is odd}\label{sec.f.r.}
We now return to the case when $n$ is odd and show that we can improve upon the formula deduced for $T_S$. We require the following two propositions.

\begin{propo}\label{P2}
Let $n$ be odd. Then $\<n\> \simeq \<(-1)^\frac{n-1}{2}\>$. 
\end{propo}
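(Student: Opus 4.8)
The plan is to recast the claim as a statement about square classes. Two one-dimensional forms $\<c\>$ and $\<d\>$ over $K$ are isometric exactly when $cd \in \Kdt$, so proving $\<n\> \simeq \<(-1)^{\nlo}\>$ amounts to showing $(-1)^{\nlo}\,n \in \Kdt$, i.e. that $(-1)^{\nlo} n$ is a square in $K$. Everything reduces to this single congruence modulo $\Kdt$.

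The route I would take uses the factorization of $n$ coming from the cyclotomic splitting. Since $\omega$ is a primitive $n$-th root of unity lying in $K$, the polynomial $(x^n-1)/(x-1) = 1 + x + \dots + x^{n-1} = \prod_{k=1}^{n-1}(x - \omega^k)$ splits over $K$, and evaluating at $x = 1$ gives $n = \prod_{k=1}^{n-1}(1 - \omega^k)$. The decisive observation is that, because $n$ is odd, $\omega$ is itself a square in $K$: indeed $\omega = \omega^{n+1} = (\omega^{(n+1)/2})^2$ with $(n+1)/2$ a positive integer, so every power of $\omega$ lies in $\Kdt$. I then pair the factor indexed by $k$ with the one indexed by $n-k$, using the identity $(1-\omega^k)(1-\omega^{n-k}) = -\omega^{-k}(1-\omega^k)^2$; modulo $\Kdt$ each such pair contributes just $-1$, since $\omega^{-k}(1-\omega^k)^2 \in \Kdt$. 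Because $n$ is odd there is no self-paired index (no $k$ with $2k = n$), so the $n-1$ factors fall into exactly $\nlo$ disjoint pairs, giving $n \equiv (-1)^{\nlo} \pmod{\Kdt}$, which is precisely the required square condition.

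The step I expect to be the main obstacle is the sign bookkeeping in the pairing: one must verify the identity $(1-\omega^k)(1-\omega^{n-k}) = -\omega^{-k}(1-\omega^k)^2$ and confirm that the $\nlo$ pairs exhaust all factors with no leftover middle term, which is exactly where the oddness of $n$ is used. As a cross-check I would run the computation for $n = 3$ (giving $-3 \in \Kdt$) and $n = 5$ (giving $5 \in \Kdt$). An alternative proof avoids the pairing by instead invoking that $\disc(x^n - 1) = (-1)^{(n-1)(n-2)/2}\,n^n$ is a square in $K$ (a Vandermonde square, since all roots lie in $K$) and then reducing the exponents modulo $2$ using that $n$ is odd; there the same sign bookkeeping reappears in the parity of $(n-1)(n-2)/2$, so I would treat that as the delicate point in either approach.
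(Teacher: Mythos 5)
Your argument is correct, but it proceeds by a genuinely different route from the paper. The paper proves $(-1)^{\nlo}n \in \Kdt$ via quadratic Gauss sums: for each odd prime $p$ dividing $n$ it uses the classical identity $\tau_p^2 = (-1)^{\frac{p-1}{2}}p$ (the Gauss sum $\tau_p$ lies in $K$ because $K$ contains a primitive $p$-th root of unity), and then handles composite $n$ by multiplying these identities over a prime factorization, splitting into the cases $n \equiv 1$ and $n \equiv 3 \pmod 4$. Your proof instead factors $n = \prod_{k=1}^{n-1}(1-\omega^k)$ by evaluating $1 + x + \dots + x^{n-1}$ at $x=1$, notes that $\omega \in \Kdt$ because $\omega = (\omega^{(n+1)/2})^2$ with $n$ odd, and pairs the factors via $(1-\omega^k)(1-\omega^{n-k}) = -\omega^{-k}(1-\omega^k)^2 \equiv -1 \pmod{\Kdt}$; the identity and the count of $\nlo$ pairs both check out, and the oddness of $n$ enters exactly where you say it does (no self-paired index, and $\omega$ a square). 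Your approach is more self-contained and uniform: it avoids Legendre symbols, quadratic Gauss sums and the prime/composite and mod-$4$ case distinctions entirely, and it exhibits an explicit square root of $(-1)^{\nlo}n$ in $K$. What the paper's approach buys is a direct link to the classical number-theoretic statement $p^* = \tau_p^2$, but it pays for this with the case analysis and with the (unstated) need to verify $\sum_j \frac{p_j-1}{2} \equiv \nlo \pmod 2$; your pairing argument sidesteps all of that, and your discriminant variant $\disc(x^n-1) = (-1)^{(n-1)(n-2)/2}n^n \in \Kdt$ is an equally valid third route since $(n-1)(n-2)/2$ and $\nlo$ have the same parity for $n$ odd.
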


\begin{proof}
We recall the following definitions from classical number theory: for $p$ an odd prime
$$\tau_p := \sum_{i=1}^{p-1} \lb \frac{i}{p} \rb \ohm^i$$
where $\lb \frac{i}{p}\rb $ is the Legendre symbol and
$$ p^* := \lb \frac{-1}{p}\rb p. $$ We have the theorems (see \cite{N}, for example)
$$p^*=\tau_p^2 \text{ and } \lb \frac{-1}{p}\rb =(-1)^\frac{p-1}{2}$$
and the facts
$$p_1 \equiv 1 \pmod{4} \iff p_1^*=p_1 \quad\text{ and }\quad p_2 \equiv 3 \pmod{4} \iff p_2^*=-p_2.$$
Suppose $n$ is prime. Then $\tau_n^2=n^*=\lb \frac{-1}{n} \rb n=(-1)^\frac{n-1}{2}n$. On the other hand suppose $n$ is compound. Write $n=\prod_{j=1}^{t} p_j$ where the $p_j$ are primes, not all necessarily different and $t > 1$. If $n \equiv 1 \pmod{4}$ then 
$$n=\prod_{j=1}^{t}p_j^* = (-1)^\frac{n-1}{2}\prod_{j=1}^{t}p_j^* = (-1)^\frac{n-1}{2}\lb \prod_{j=1}^t \tau_j \rb ^2.$$
Otherwise, if $n \equiv 3 \pmod{4}$ then
$$n=-\prod_{j=1}^{t}p_j^* = (-1)^\frac{n-1}{2}\prod_{j=1}^{t}p_j^* = (-1)^\frac{n-1}{2}\lb \prod_{j=1}^t \tau_j\rb ^2.$$
Hence $\<n\> \simeq \<(-1)^\frac{n-1}{2}\>$.
\end{proof}

\begin{propo}\label{P3}
Let $n$ be odd. Then $n \times \<1\> \simeq \<(-1)^\frac{n-1}{2}\> \perp \frac{n-1}{2} \times \HP$.
\end{propo}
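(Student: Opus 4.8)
The plan is to diagonalise $n \times \<1\>$ by an explicit invertible change of variables built from the root of unity $\omega$, and then to invoke Proposition~\ref{P2}. Realise $q = n \times \<1\>$ as the form $z \mapsto \sum_{i=0}^{n-1} z_i^2$ on $K^n$, so that its Gram matrix in the standard basis is the identity $I$. The device I would introduce is the ``discrete Fourier'' matrix $F = \left( \omega^{jk} \right)_{0 \le j,k \le n-1}$. I would first record that $F$ is symmetric, since $\omega^{jk} = \omega^{kj}$, and that $F$ is invertible: from $F^2 = nP$ (computed below) one gets $(\det F)^2 = \det(F^2) = n^n \det P \ne 0$, using that $\ch(K) \nmid n$.

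The computational heart is the evaluation of $F^T F = F^2$. Summing a geometric series gives $(F^2)_{jl} = \sum_{k=0}^{n-1} \omega^{k(j+l)}$, which equals $n$ when $j+l \equiv 0 \pmod n$ and $0$ otherwise, precisely because $\omega$ is a \emph{primitive} $n$-th root of unity. Hence $F^2 = nP$, where $P$ is the permutation matrix of the involution $k \mapsto n-k \pmod n$ on the index set $\{0,\dots,n-1\}$. Performing the substitution $z = Fw$ then replaces the Gram matrix $I$ by $F^T I F = F^2 = nP$, so $q$ is isometric to the form with Gram matrix $nP$.

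To finish, I would read off $nP$ block by block. Since $n$ is odd, the involution $k \mapsto n-k$ fixes only $k=0$ and pairs the remaining $n-1$ indices into $\frac{n-1}{2}$ transpositions $\{j, n-j\}$. The fixed index contributes the one-dimensional form $\<n\>$, while each transposition contributes a $2 \times 2$ block $\left(\begin{smallmatrix} 0 & n \\ n & 0 \end{smallmatrix}\right)$, which is nondegenerate and isotropic, hence isometric to $\HP$. This yields $n \times \<1\> \simeq \<n\> \perp \frac{n-1}{2} \times \HP$, and applying Proposition~\ref{P2} to replace $\<n\>$ by $\<(-1)^{(n-1)/2}\>$ gives the claim. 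The one genuine step is spotting that $\omega$ supplies the invertible matrix $F$ with $F^2 = nP$; a naive attempt to peel off hyperbolic planes directly from the sum of squares stalls, since neither the isotropy nor the shape of the residual form is transparent without this substitution. Once the Fourier change of variables is in hand, everything reduces to the routine identification of hyperbolic blocks, so I anticipate no further obstacle.
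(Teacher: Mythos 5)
Your proof is correct, but it takes a genuinely different route from the paper's. The paper argues via the level of $K$: since $K$ contains a primitive $n$-th root of unity with $n$ odd, $s(K)\in\{1,2,4\}$, and a result of Fein--Gordon--Smith pins down $s(K)=2$ when $n\equiv 3,5\pmod 8$; a case analysis on $n\bmod 8$ then converts blocks $2\times\<1\>$ or $4\times\<1\>$ into hyperbolic planes and produces the sign $(-1)^{\nlo}$ directly, without passing through $\<n\>$ or Proposition~\ref{P2} at all. Your argument instead exhibits an explicit congruence: the matrix $F=(\omega^{jk})$ is symmetric, satisfies $F^2=nP$ with $P$ the permutation matrix of $k\mapsto n-k \pmod n$ (the geometric-series computation and the invertibility of $F$ both check out, using $\ch(K)\nmid n$), and since $n$ is odd the involution has the single fixed point $0$, giving $n\times\<1\>\simeq\<n\>\perp\nlo\times\HP$ before invoking Proposition~\ref{P2}. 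What your approach buys: it is uniform (no case split on $n\bmod 8$), avoids the external level-theoretic input from \cite{FGS}, and makes visible that this statement is governed by the same Gram-matrix combinatorics as the proof of Proposition~\ref{P1}(i) -- indeed it is essentially the trace-form computation for the \'etale algebra $K[t]/(t^n-1)$. What the paper's approach buys is independence from Proposition~\ref{P2}: your route concentrates all the arithmetic in the Gauss-sum identity $\<n\>\simeq\<(-1)^{\nlo}\>$, whereas the paper's level argument gives a second, independent derivation of that sign. Both proofs are complete and correct.
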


\begin{proof}
Recall that the {\it level} of a field $F$, denoted $s(F)$,  is the least number of squares required to sum to $-1$ in $F$ or $\infty$ if no such number exists. Since $K$ contains a primitive $n$-th root of unity, $s(K) \in \{1, 2, 4\}$. If $n \equiv 3$ or $5\pmod{8}$, then $n$ has a prime divisor $p$ such that $p \equiv 3$ or $5 \pmod{8}$. In this case we have $s(K)=2$ by \cite{FGS}. Thus, $2 \times \<1\> \simeq 2 \times \<-1\>$ which implies
\begin{align}
\notag n \times \<1\> &\simeq 3 \times \<1\> \perp \nlo \times \HP &&\text{ if } n \equiv 3 \pmod{8} \\
\notag &\simeq \<-1\> \perp \nlo\times\HP\\
\notag n \times \<1\> &\simeq 5 \times \<1\> \perp \frac{n-7}{2}\times\HP &&\text{ if } n \equiv 5 \pmod{8} \\
\notag &\simeq \<1\> \perp \nlo\times\HP.
\end{align}
  In the remaining cases of interest we have $s(K) \le 4$, i.e.
$4 \times \<1\> \simeq 4 \times \<-1\>$. Therefore,
\begin{align}
\notag n \times \<1\> &\simeq \<1\> \perp \nlo \times \HP &&\text{ if } n \equiv 1 \pmod{8} \\
\notag n \times \<1\> &\simeq 7 \times \<1\> \perp \frac{n-7}{2}\times\HP &&\text{ if } n \equiv 7 \pmod{8} \\
\notag &\simeq \<-1\> \perp \nlo\times\HP.
\end{align}
\end{proof}

\begin{coro}
For $n$ odd, $T_S \simeq n^2 \times \<\lb -1 \rb ^\nlo\>$.
\end{coro}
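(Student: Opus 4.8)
The plan is to reduce $T_S$ to a standard form by means of Proposition \ref{P1}(i) and Proposition \ref{P2}, and then to recognise $n^2 \times \<(-1)^\nlo\>$ as the very same form. Write $\ve = (-1)^\nlo$ throughout. Proposition \ref{P1}(i) gives $T_S \simeq \<n\> \perp \nslo \times \HP$, while Proposition \ref{P2} gives $\<n\> \simeq \<\ve\>$; combining these at once yields $T_S \simeq \<\ve\> \perp \nslo \times \HP$. It therefore suffices to establish the single isometry $n^2 \times \<\ve\> \simeq \<\ve\> \perp \nslo \times \HP$.

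To prove this I would first scale Proposition \ref{P3} by $\<\ve\>$. Since $\ve = \pm 1$ we have $\<\ve\>\<\ve\> \simeq \<1\>$, and since $\<\ve\>\HP = \<\ve, -\ve\>$ is a two-dimensional isotropic form it is again hyperbolic, so $\<\ve\>\HP \simeq \HP$. Tensoring the isometry $n \times \<1\> \simeq \<\ve\> \perp \nlo \times \HP$ of Proposition \ref{P3} with $\<\ve\>$ then gives $n \times \<\ve\> \simeq \<1\> \perp \nlo \times \HP$.

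Now I would iterate. Writing $n^2 \times \<\ve\> = n \times (n \times \<\ve\>)$ and substituting the isometry just obtained yields $n^2 \times \<\ve\> \simeq (n \times \<1\>) \perp \nslno \times \HP$, the hyperbolic multiplicity being $n \cdot \nlo = \nslno$. Applying Proposition \ref{P3} once more to the summand $n \times \<1\>$ and collecting the hyperbolic planes, the arithmetic $\nlo + \nslno = \nslo$ delivers $n^2 \times \<\ve\> \simeq \<\ve\> \perp \nslo \times \HP$. Comparison with the expression for $T_S$ from the first paragraph finishes the argument by transitivity of isometry.

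Everything here is routine; the genuine content is already contained in Propositions \ref{P2} and \ref{P3}. The only points needing attention are the two bookkeeping facts that $\<\ve\>\HP \simeq \HP$ and that the hyperbolic multiplicities sum correctly to $\nslo$, so the sole (minor) obstacle is to arrange the scaling and the iteration cleanly rather than to prove anything new.
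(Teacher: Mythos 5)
Your argument is correct and follows essentially the same route as the paper: reduce $T_S$ to $\<(-1)^\nlo\> \perp \nslo\times\HP$ via Propositions \ref{P1}(i) and \ref{P2}, then identify this with $n^2\times\<(-1)^\nlo\>$ using Proposition \ref{P3}. Your iteration $n^2\times\<\ve\> = n\times(n\times\<\ve\>)$ is just an unpacking of the paper's identity $n^2 \times \<(-1)^\nlo\> = n \times \<(-1)^\nlo\> \ox\, n \times \<1\>$, so the two proofs coincide in substance, with yours merely spelling out the bookkeeping the paper leaves implicit.
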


\begin{proof}
Propositions \ref{P1}{\it(i)} and \ref{P2} show that for $n$ odd, 
\begin{align}
\notag T_S &\simeq \<n\> \perp \nslo \times \HP \\ 
\notag &\simeq \<(-1)^\nlo\> \perp \nslo \times \HP .
\end{align}
Then by Proposition \ref{P3} and the fact that 
$$n^2 \times \<(-1)^\nlo\> =  n \times \<(-1)^\nlo\> \ox n \times \<1\>,$$
we get $T_S \simeq n^2 \times \<\lb  -1 \rb ^\nlo\>$.
\end{proof}

\begin{remark}
For $n$ odd, we could also write $T_S \simeq n \times \<1\> \perp \nslno\times\HP$ which can be deduced from the calculation in the split case, see \cite{DWL}, together with Springer's Theorem on odd degree extensions (see \cite[p.194]{TYL}, for example).
\end{remark}

\section{Trace form criteria to determine if a symbol algebra is division}\label{sec.t.f.d.c.}

\begin{propo}\label{P4}
If $n \equiv 2 \pmod{4}$ then the quaternion algebra $(a, b)_K$ is contained in $S$.
\end{propo}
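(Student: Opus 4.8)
The plan is to embed $(a,b)_K$ by exhibiting two explicit elements of $S$ that satisfy the defining relations of a quaternion algebra and span a four-dimensional subalgebra. Write $n=2m$; the hypothesis $n\equiv 2\pmod 4$ is precisely the statement that $m=n/2$ is \emph{odd}, and I expect this parity to be the one genuinely load-bearing fact in the argument.

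First I would record the behaviour of the root of unity. Since $\omega$ is a primitive $n$-th root of unity and $n$ is even, $\omega^{m}$ is a primitive square root of unity, so $\omega^{m}=-1$ (here $\mathrm{char}(K)\neq 2$ ensures $-1\neq 1$). Next I would set $i:=x^{m}$ and $j:=y^{m}$ and check the three quaternion relations. The squares are immediate from the presentation of $S$: $i^{2}=x^{2m}=x^{n}=a$ and $j^{2}=y^{2m}=y^{n}=b$. For anticommutation I would iterate $yx=\omega xy$ to obtain $y^{m}x^{m}=\omega^{m^{2}}x^{m}y^{m}$, so that $ji=\omega^{m^{2}}ij$; since $m$ is odd, $\omega^{m^{2}}=(\omega^{m})^{m}=(-1)^{m}=-1$, giving $ji=-ij$. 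This is the step where the congruence $n\equiv 2\pmod 4$ is essential: were $n\equiv 0\pmod 4$, then $m$ would be even, $\omega^{m^{2}}=1$, and $i,j$ would commute, so the construction would collapse.

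Finally I would confirm that the subalgebra generated by $i$ and $j$ is genuinely four-dimensional, and hence a copy of $(a,b)_K$ rather than a proper quotient. This is transparent from the basis $\{x^{i}y^{j}\}$ of $S$: the four elements $1=x^{0}y^{0}$, $i=x^{m}$, $j=y^{m}$, and $ij=x^{m}y^{m}$ are distinct basis vectors, hence $K$-linearly independent. A $K$-algebra spanned by $1,i,j,ij$ with $i^{2}=a$, $j^{2}=b$, and $ji=-ij$ is by definition isomorphic to $(a,b)_K$, which completes the argument.

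I do not anticipate a serious obstacle here; the computation is short once $\omega^{m}=-1$ is in place. The only point requiring care is the parity of $m$, which must be tracked to conclude $\omega^{m^{2}}=-1$ rather than $+1$, and the verification that the chosen generators lie in distinct basis directions so that the resulting subalgebra really has dimension four.
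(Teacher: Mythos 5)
Your proposal is correct and follows essentially the same route as the paper: both take $x^{n/2}$ and $y^{n/2}$ as the quaternion generators and use $y^{n/2}x^{n/2}=\omega^{(n/2)^2}x^{n/2}y^{n/2}=-x^{n/2}y^{n/2}$, which relies on $n/2$ being odd. Your additional check that $1, x^{n/2}, y^{n/2}, x^{n/2}y^{n/2}$ are distinct basis vectors (so the subalgebra is genuinely four-dimensional) is a sensible extra detail that the paper leaves implicit.
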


\begin{proof}
Consider the elements $x^{n/2}$,  $y^{n/2} \in S$. Then $\lb x^{n/2} \rb ^2=a$, $\lb y^{n/2}\rb ^2=b$ and $y^{n/2}x^{n/2}=\lb \ohm^{n/2}\rb ^{n/2}x^{n/2}y^{n/2}=-x^{n/2}y^{n/2}$.
\end{proof}

\begin{propo}\label{P5}
If $n \equiv 2 \pmod{4}$ and $T_S$ is hyperbolic then $-1$ is a square in $K$ and $S$ is not a division algebra.  
\end{propo}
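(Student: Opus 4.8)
The plan is to extract from the hyperbolicity of $T_S$ enough information about a single four-dimensional subform to control both a discriminant (forcing $-1 \in \Kdt$) and an associated quaternion algebra (forcing the failure of the division property).

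First I would specialise Proposition \ref{P1}{\it(ii)}. Since $n \equiv 2 \pmod 4$, the integer $n/2$ is odd, so $(-1)^{n/2} = -1$ and hence $T_S \simeq \<n\>\<1,a,b,-ab\> \perp \nslfo \times \HP$. If $T_S$ is hyperbolic, then $T_S \simeq \frac{n^2}{2}\times\HP$, and Witt cancellation of the $\nslfo$ hyperbolic planes forces the four-dimensional form $\<n\>\<1,a,b,-ab\>$ to be hyperbolic as well. Scaling a hyperbolic form by a one-dimensional form again yields a hyperbolic form, and $\<n\>\<n\> \simeq \<1\>$, so I would deduce that $\<1,a,b,-ab\>$ is itself hyperbolic.

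Next I would read off the discriminant. The determinant of $\<1,a,b,-ab\>$ is $1\cdot a\cdot b\cdot(-ab) = -a^2b^2 \equiv -1 \pmod{\Kdt}$, whereas a hyperbolic form of this dimension has square determinant. Since isometric forms share the same determinant modulo squares, this gives $-1 \in \Kdt$, which is the first assertion. With $-1$ now a square I would identify the form: using $a \equiv -a$, $b \equiv -b$ and $-ab \equiv ab$ modulo squares, one has $\<1,a,b,-ab\> \simeq \<1,-a,-b,ab\> = \<1,-a\>\<1,-b\>$, the norm form of the quaternion algebra $\qa{a}{b}{K}$. This is a two-fold Pfister form, and a Pfister form is hyperbolic precisely when it is isotropic, which for a quaternion norm form is equivalent to the algebra being split; hence $\qa{a}{b}{K} \cong M_2(K)$. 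By Proposition \ref{P4} the algebra $\qa{a}{b}{K}$ embeds in $S$, and since the split quaternion algebra $M_2(K)$ contains zero divisors, so does $S$, so $S$ is not a division algebra.

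The specialisation and the determinant computation are routine; I expect the main point to be the clean identification of $\<1,a,b,-ab\>$ with the quaternion norm form $\<1,-a,-b,ab\>$, where I must use that $-1$ is a square to pass between the two, together with the Pfister-form dictionary that translates hyperbolicity into splitting and then feeds into the embedding of Proposition \ref{P4}.
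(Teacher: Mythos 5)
Your proposal is correct and follows essentially the same route as the paper's own proof: specialise Proposition \ref{P1}(ii) using $(-1)^{n/2}=-1$, cancel the hyperbolic planes to see that $\<1,a,b,-ab\>$ is hyperbolic, compare determinants to get $-1\in\Kdt$, identify the form with the norm form of $(a,b)_K$, and combine its hyperbolicity with the embedding from Proposition \ref{P4}. You merely spell out a few steps (Witt cancellation, the scaling by $\<n\>$, the Pfister-form dictionary) that the paper leaves implicit.
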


\begin{proof}
Suppose $T_S$ is hyperbolic. By Proposition \ref{P4}, $Q:=(a,b)_K \subset S$. Since $n$ is even we have from Proposition \ref{P1} {\it(ii)} that
$$T_S \simeq \<n\>\<1, a, b, (-1)^{n/2}ab\> \perp \nslfo \times \HP \simeq \<n\>\<1, a, b, -ab\> \perp \nslfo \times \HP.$$
Thus, by our assumption, $\<1, a, b, -ab\> \simeq 2 \times \HP$ and by evaluating determinants we get $\<-1\> \simeq \<1\>$. Thus, the norm form of $Q$, $N_Q:=\<1, -a, -b, ab\>$ is hyperbolic. This implies that $Q$, and thus $S$, contains zero divisors. Therefore, $S$ is not division.
\end{proof}

\begin{propo}\label{P6}
Let $K$ be a field such that $-1 \in \Kdt$. Let $A$ be any central simple algebra over $K$. Let $n:=\deg_KA$ be a power of $2$. If $T_A$ is not hyperbolic, then $A$ is a division algebra.
\end{propo}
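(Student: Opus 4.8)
The plan is to prove the contrapositive: if $A$ is \emph{not} a division algebra, then $T_A$ is hyperbolic. By Wedderburn's theorem I may write $A \cong M_r(D)$ for a central division algebra $D$ over $K$ and some integer $r \ge 1$, with $A$ division precisely when $r=1$; so I assume $r \ge 2$. Since $n = \deg_K A = r \cdot \deg_K D$ is a power of $2$, both $r$ and $\deg_K D$ are powers of $2$, and as $r \ge 2$ this forces $r$ to be even. This divisibility observation is the only place the hypothesis that $n$ is a $2$-power enters.

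Next I would reduce the trace form of $A$ to that of $D$. Writing $A \cong M_r(K) \ox_K D$ and using that the reduced trace is multiplicative on tensor products, $\Trd_A(z \ox w) = \Trd_{M_r(K)}(z)\,\Trd_D(w)$, the polar form of $T_A$ is the tensor product of the polar forms of $T_{M_r(K)}$ and $T_D$; since $\ch K \ne 2$ this yields the isometry $T_A \simeq T_{M_r(K)} \ox T_D$. A direct computation with the matrix units $E_{ij}$ gives $\Trd_{M_r(K)}(E_{ij}E_{kl}) = \gd_{jk}\gd_{il}$, so the $r$ diagonal units each contribute $\<1\>$ while each pair $\{E_{ij},E_{ji}\}$ with $i \ne j$ contributes a hyperbolic plane, whence $T_{M_r(K)} \simeq r \times \<1\> \perp \frac{r(r-1)}{2} \times \HP$.

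Combining these, $T_A \simeq (r \times \<1\> \perp \frac{r(r-1)}{2}\times\HP) \ox T_D \simeq (r \times T_D) \perp \frac{r(r-1)}{2}\times(\HP \ox T_D)$. The summand $\HP \ox T_D \simeq T_D \perp (-T_D)$ is hyperbolic, so only $r \times T_D$ remains. Here the hypothesis $-1 \in \Kdt$ is decisive: it gives $\<1\> \simeq \<-1\>$, hence $2 \times \psi \simeq \<1,-1\> \ox \psi = \HP \ox \psi$ is hyperbolic for every form $\psi$; since $r$ is even, $r \times T_D = \frac{r}{2}\times(2 \times T_D)$ is therefore hyperbolic. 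Thus $T_A$ is hyperbolic, which proves the contrapositive. The main obstacle is the reduction step $T_A \simeq T_{M_r(K)} \ox T_D$: one must justify the multiplicativity of the reduced trace on $M_r(K) \ox_K D$ and confirm that passing between the quadratic form and its bilinear polar form introduces no difficulty, which is legitimate precisely because $\ch K \ne 2$. Everything after that is the elementary collapse of each factor $2 \times \psi$ to a hyperbolic form under the assumption $-1 \in \Kdt$.
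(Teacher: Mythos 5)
Your proof is correct and follows essentially the same route as the paper: contrapositive via Wedderburn, the decomposition $T_A \simeq T_{M_r(K)} \ox T_D$, parity of $r$ from the $2$-power hypothesis, and collapse of $2\times\psi$ to a hyperbolic form using $-1\in\Kdt$. The only difference is that you prove the identity $T_{M_r(K)\ox_K D}\simeq T_{M_r(K)}\ox T_D$ and the diagonalisation of $T_{M_r(K)}$ from scratch, where the paper simply cites Lewis's Lemma~1.2.
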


\begin{proof}
Suppose $A$ is not a division algebra. Then $A \cong M_r(D)$ for some integer $r>1$ and some division algebra $D$ over $K$. Now $T_A \simeq T_{M_r(D)} \simeq T_{M_r(K) \ox_K D} \simeq T_{M_r(K)} \ox T_D \simeq r \times \<1\> \ox T_D$ by \cite[Lemma 1.2]{DWL}. Since $n$ is a $2$-power, $r$ must be even and since $-1 \in \Kdt$, we have that $T_A$ is hyperbolic.
\end{proof}

\section{Exterior powers of the trace form of a symbol algebra}\label{sec.e.p.}

Bourbaki defined the concept of exterior power of a symmetric bilinear form in \cite[IX, \S1, (37)]{B}. McGarraghy derived basic properties of such forms in the Witt-Grothendieck ring of a field  in \cite{M}. We present some key definitions and results for exterior powers from McGarraghy's paper as well as some new results. In all cases, $K$ denotes a field of characteristic different from $2$.

\begin{defi}\label{D1}
Let $\vf: V \times V \rightarrow K$ be a bilinear form and let $k$ be a positive integer not greater than $m$. We define the {\it $k$-fold exterior power} of $\vf$, 
$$\kpow \vf: \kpow V \times \kpow V \rightarrow K$$
by 
$$\kpow \vf (x_1 \wedge \dots \wedge x_k, y_1 \wedge \dots \wedge y_k) = \det\big(\vf(x_i, y_j)\big)_{1 \le i, j \le k}.$$
We define $\gL^0\vf := \<1\>$, the identity form of dimension 1. For $k>m$, we define $\kpow \vf$ to be the zero form, since $\kpow V = 0$ for all $k>m$.
\end{defi}

Let $V$ be a vector space of dimension $m$ over $K$. If $k$ is a non-negative integer then the $k$-fold exterior power of $V$, $\kpow V$, has dimension $m \choose k$, where we take $m \choose k$ to be $0$ for all $k>m$. In particular, if $\{v_1, \dots , v_m\}$ is a basis for $V$, then a basis for $\kpow V$ is given by the set of $k$-fold wedge products $\{v_{i_1} \wedge \dots \wedge v_{i_k} : 1 \le i_1 < \dots < i_k  \le m \}$ and there are $m \choose k $ such expressions.

\begin{remark}
We have $\gL^1\vf \simeq \vf$.
It is easily seen that $\kpow \vf$ is a bilinear form and is symmetric if $\vf$ is symmetric.
Also, if $q$ is the quadratic form associated to $\vf$, we write $\kpow q$ for the quadratic form associated to $\kpow \vf$.
\end{remark}

\begin{propo}\cite[Proposition 4.1]{M}\label{S.P4.1}
Let $V$ be a vector space over $K$ with $\textup{dim}_KV=m$. Let $\vf$ be a symmetric bilinear form over $K$ with $\vf \simeq \<a_1, \dots , a_m\>$. Then $\kpow \vf$ is a symmetric bilinear form of dimension $m \choose k$ and 
$$\kpow \vf \simeq \lperp_{1 \le i_1 < \dots < i_k \le m}\<a_{i_1} \dots a_{i_k}\>.$$
In particular, $\kpow (m \times \<1\>) \simeq {m \choose k} \times \<1\>$.
\end{propo}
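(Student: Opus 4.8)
The plan is to exploit the determinant formula of Definition~\ref{D1} by working in a basis adapted to the diagonalisation of $\vf$. Since $\vf \simeq \<a_1, \dots, a_m\>$, I would first fix an orthogonal basis $\{v_1, \dots, v_m\}$ of $V$ with $\vf(v_i, v_j) = a_i \delta_{ij}$. The induced wedge products $v_I := v_{i_1} \wedge \dots \wedge v_{i_k}$, indexed by the increasing multi-indices $I = (i_1 < \dots < i_k)$, then form a basis of $\kpow V$, of which there are exactly ${m \choose k}$; this already accounts for the claimed dimension of $\kpow\vf$.

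The heart of the argument is to compute the Gram matrix of $\kpow\vf$ in this basis and to show that it is diagonal. For two increasing multi-indices $I = (i_1 < \dots < i_k)$ and $J = (j_1 < \dots < j_k)$, Definition~\ref{D1} gives
$$\kpow\vf(v_I, v_J) = \det\big(\vf(v_{i_s}, v_{j_t})\big)_{1 \le s, t \le k} = \det\big(a_{i_s}\delta_{i_s, j_t}\big)_{1 \le s, t \le k}.$$
If $I \neq J$, then some index $i_s$ lies in $I$ but not in $J$, so that $\vf(v_{i_s}, v_{j_t}) = 0$ for every $t$; the $s$-th row of this matrix vanishes and the determinant is $0$. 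If $I = J$, the matrix is diagonal with entries $a_{i_1}, \dots, a_{i_k}$, so its determinant equals $a_{i_1}\cdots a_{i_k}$.

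Hence in the basis $\{v_I\}$ the form $\kpow\vf$ is represented by a diagonal matrix whose entries are the products $a_{i_1}\cdots a_{i_k}$, one for each increasing multi-index, which is precisely the asserted isometry
$$\kpow\vf \simeq \lperp_{1 \le i_1 < \dots < i_k \le m}\<a_{i_1}\cdots a_{i_k}\>.$$
Setting every $a_i = 1$ makes each diagonal entry equal to $1$, and counting the ${m \choose k}$ multi-indices yields $\kpow(m \times \<1\>) \simeq {m \choose k} \times \<1\>$. I expect no serious obstacle here: the only point needing care is the vanishing of the off-diagonal Gram entries, which follows from the presence of a zero row whenever $I \neq J$, while the remainder is bookkeeping on multi-indices and the dimension count ${m \choose k}$.
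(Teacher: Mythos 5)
Your argument is correct and complete: the paper itself offers no proof of this statement, merely citing it as Proposition 4.1 of McGarraghy's paper \cite{M}, and your computation is the standard (and essentially the cited) one. The key step --- that $\kpow\vf(v_I,v_J)$ vanishes for distinct increasing multi-indices $I\neq J$ because some $i_s\in I\setminus J$ forces a zero row in the matrix $\big(\vf(v_{i_s},v_{j_t})\big)$ --- is handled correctly, and the diagonal case $I=J$ plainly yields $a_{i_1}\cdots a_{i_k}$, so nothing is missing.
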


\begin{remark}
We also have that $\kpow (m \times \<-1\>) = {m \choose k} \times \<(-1)^k\>$.
\end{remark}

\begin{propo}\cite[Proposition 7.3]{M}\label{S.P7.3}
Let $\vf$ and $\psi$ be symmetric bilinear forms over $K$ and let $k \in \mathbb{N}$. Then 
$$\kpow(\vf \perp \psi)\simeq\lperp_{i+j=k}\gL^i\vf \ox \gL^j \psi$$
\end{propo}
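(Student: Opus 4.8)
The plan is to prove the exterior power distributes over orthogonal sums, using a basis-adapted description of the exterior algebra. Concretely, I would start by diagonalising both forms: write $\vf \simeq \<a_1, \dots, a_p\>$ with respect to an orthogonal basis $\{u_1, \dots, u_p\}$ of its underlying space $U$, and $\psi \simeq \<b_1, \dots, b_q\>$ with respect to an orthogonal basis $\{w_1, \dots, w_q\}$ of its space $W$. Then $\vf \perp \psi$ lives on $U \oplus W$ and is diagonalised by the concatenated basis $\{u_1, \dots, u_p, w_1, \dots, w_q\}$, which is again orthogonal because $\vf$ and $\psi$ pair to zero across the summands. This reduces everything to bookkeeping with wedge products of basis vectors.

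The key computational step is to apply Proposition \ref{S.P4.1} to each of the three forms appearing. On the left, $\kpow(\vf \perp \psi)$ is diagonalised as $\lperp \<c_{i_1} \cdots c_{i_k}\>$, where the index set runs over all $k$-element subsets $\{i_1 < \dots < i_k\}$ of $\{1, \dots, p+q\}$ and $c_\ell$ denotes the $\ell$-th diagonal entry of the concatenated basis. On the right, a typical summand $\gL^i\vf \ox \gL^j\psi$ with $i+j=k$ is, again by Proposition \ref{S.P4.1}, diagonalised by products $(a_{s_1} \cdots a_{s_i})(b_{t_1} \cdots b_{t_j})$ ranging over $i$-subsets of $\{1, \dots, p\}$ and $j$-subsets of $\{1, \dots, q\}$.

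The heart of the argument is then a bijection between the two index sets. A $k$-element subset $T$ of $\{1, \dots, p+q\}$ splits uniquely as $T = T_U \sqcup T_W$, where $T_U$ consists of the elements $\le p$ (coming from $U$) and $T_W$ of those $> p$ (coming from $W$); setting $i = |T_U|$ and $j = |T_W|$ gives $i + j = k$, and the diagonal coefficient $\prod_{\ell \in T} c_\ell$ equals the corresponding product of $a$'s times the product of $b$'s. Conversely, every pair consisting of an $i$-subset of $\{1, \dots, p\}$ and a $j$-subset of $\{1, \dots, q\}$ with $i+j=k$ arises from exactly one such $T$. This matching is coefficient-preserving, so the two orthogonal sums are term-by-term isometric, which establishes the claimed isometry. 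To be fully rigorous one should also check dimensions agree, i.e. $\binom{p+q}{k} = \sum_{i+j=k}\binom{p}{i}\binom{q}{j}$, which is precisely the Vandermonde identity and confirms the bijection is a genuine bijection.

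I expect the main obstacle to be conceptual rather than technical: Proposition \ref{S.P4.1} is stated for a form \emph{already in diagonal shape}, so one must be slightly careful that $\gL^i\vf$ and $\gL^j\psi$ are computed in bases compatible with the splitting of the big basis, and that the tensor product $\gL^i\vf \ox \gL^j\psi$ is read off on the wedge products indexed by the two halves. The only subtlety is the boundary terms $i=0$ and $j=0$, where the convention $\gL^0\vf = \<1\>$ from Definition \ref{D1} must be invoked so that the empty product of coefficients is correctly interpreted as $1$; once this is acknowledged the bijection covers these cases as well, and the argument goes through uniformly.
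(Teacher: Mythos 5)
Your proposal is correct. Note that the paper itself offers no proof of this statement: it is quoted verbatim from McGarraghy \cite[Proposition 7.3]{M}, so there is no internal argument to compare against. Your diagonalisation argument is a legitimate self-contained proof: since $\ch(K)\neq 2$ the (nondegenerate) forms $\vf$ and $\psi$ can be diagonalised, Proposition \ref{S.P4.1} describes all three exterior powers explicitly, the tensor product of diagonal forms multiplies entries pairwise, and the splitting of a $k$-subset of $\{1,\dots,p+q\}$ into its parts below and above $p$ gives a coefficient-preserving bijection between the diagonal entries on the two sides, with Vandermonde confirming the dimension count. The standard proof in the cited reference instead exhibits the natural vector-space isomorphism $\kpow(U\oplus W)\cong\bigoplus_{i+j=k}\gL^iU\ox\gL^jW$ and checks directly that it is an isometry; that route is basis-free and does not presuppose diagonalisability, whereas yours is more elementary but leans on two small points you should make explicit: that $\kpow$ respects isometry (so that $\vf$ may be replaced by a diagonal representative), and that Proposition \ref{S.P4.1} is itself not being used circularly (it is proved independently by computing Gram matrices in the wedge basis, so there is no circularity). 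With those remarks added, your argument is complete.
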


\subsection{Exterior powers of hyperbolic forms}

We now compute exterior powers of a hyperbolic form $\phi \simeq h \times \HP$ where $h \in \mathbb{N}$.

\begin{propo}\label{P8}
Let $\phi \simeq h \times \HP$ where $ h \in \mathbb{N}$ and $ k$ odd with $1 \leq k \leq 2h-1$. Then
\[
\gL^k\phi \simeq \frac{1}{2} {2h \choose k} \times\HP.
\]
\end{propo}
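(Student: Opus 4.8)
The plan is to diagonalize $\phi$ explicitly and then read off $\gL^k\phi$ from Proposition \ref{S.P4.1}. Since $\phi \simeq h \times \HP$ and $\HP = \<1,-1\>$, we may write $\phi \simeq \<a_1, \dots, a_{2h}\>$ where exactly $h$ of the entries $a_i$ equal $1$ and the remaining $h$ equal $-1$. By Proposition \ref{S.P4.1}, $\gL^k\phi$ is the orthogonal sum, over all $k$-element subsets $\{i_1 < \dots < i_k\}$ of $\{1, \dots, 2h\}$, of the one-dimensional forms $\<a_{i_1}\cdots a_{i_k}\>$. Each coefficient $a_{i_1}\cdots a_{i_k}$ is $\pm 1$; more precisely, if a subset contains exactly $j$ indices drawn from the block of $-1$'s, then the corresponding coefficient is $(-1)^j$.

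First I would separate the count of dimensions from the count of signs. The number of $k$-subsets with exactly $j$ indices in the $-1$ block is $\binom{h}{j}\binom{h}{k-j}$, so Vandermonde's identity $\sum_{j=0}^{k}\binom{h}{j}\binom{h}{k-j} = \binom{2h}{k}$ correctly accounts for all $\binom{2h}{k}$ subsets, and we obtain $\gL^k\phi \simeq \lperp_{j=0}^{k}\binom{h}{j}\binom{h}{k-j}\times\<(-1)^j\>$. To prove this form hyperbolic it suffices to show that the number of $+1$ entries equals the number of $-1$ entries, that is, that the signed count $\sum_{j=0}^{k}(-1)^j\binom{h}{j}\binom{h}{k-j}$ vanishes.

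The key observation is that this signed count is exactly the coefficient of $x^k$ in the product $(1+x)^h(1-x)^h = (1-x^2)^h$. Since $(1-x^2)^h$ involves only even powers of $x$ and $k$ is odd, that coefficient is $0$. Hence the $+1$'s and the $-1$'s occur in equal numbers, namely $\tfrac{1}{2}\binom{2h}{k}$ each; in particular $\binom{2h}{k}$ is even, so this is an integer. A diagonal form with equally many $+1$'s and $-1$'s decomposes into hyperbolic planes, giving $\gL^k\phi \simeq \tfrac{1}{2}\binom{2h}{k}\times\HP$, as claimed.

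The only genuine subtlety is the sign cancellation; everything else is bookkeeping with binomial coefficients. I expect the main obstacle to be nothing deeper than recognizing the alternating Vandermonde sum as the odd coefficient of $(1-x^2)^h$, after which its vanishing is immediate from the parity of $k$. One could alternatively argue by induction on $h$ via Proposition \ref{S.P7.3}, expanding $\gL^k(\HP \perp (h-1)\times\HP)$, but the direct diagonalization route sidesteps the combinatorial recursion entirely.
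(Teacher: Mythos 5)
Your proof is correct and follows essentially the same route as the paper: both reduce to the diagonal form $\lperp_{j}\binom{h}{j}\binom{h}{k-j}\times\<(-1)^j\>$ and conclude by the cancellation of the alternating Vandermonde sum for odd $k$ (the paper gets there via Proposition~\ref{S.P7.3} applied to $h\times\<1\>\perp h\times\<-1\>$ rather than by applying Proposition~\ref{S.P4.1} to the full diagonalization, but this is the same computation). The only substantive difference is that you actually justify the identity $\sum_{j\ \mathrm{odd}}\binom{h}{j}\binom{h}{k-j}=\sum_{j\ \mathrm{even}}\binom{h}{j}\binom{h}{k-j}$ via the generating function $(1-x^2)^h$, whereas the paper merely asserts it.
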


\begin{proof} 
\begin{align*}
\kpow \phi 	&\simeq \kpow  \Big( h \times \< 1 \> \perp h \times \<-1\>\Big) \\
		&\simeq\lperp_{i+j=k}\gL^i  \Big( h \times \< 1 \> \Big) \ox \gL^j  \Big( h \times \< -1 \>\Big)\\ 
		&\simeq\lperp_{i+j=k} {h \choose i} \times \< 1 \> \ox {h \choose j} \times \< (-1)^j \>\\ 
		&\simeq \sum_{i \textup{ odd}}{h \choose i}{h \choose {k-i}} \times \<1\> \perp  \sum_{i \textup{ even}} {h \choose i}{h \choose {k-i}} \times \<-1\>.
\end{align*}
Since $$\sum_{i \textrm { odd}}{h \choose i}{h \choose k-i} = \sum_{i \textrm { even}}{h \choose i}{h \choose k-i}$$ for $k$ odd and since $\kpow\phi$ has dimension ${2h \choose k}$, the result follows.
\end{proof}

\begin{propo}\label{P9}
Let $\phi \simeq h \times \HP$ where $ h \in \mathbb{N}$, $k=2\ell$ and $0 \le \ell \le h$. Then 
\[
\kpow\phi = \gL^{2\ell}\phi \simeq  {h \choose \ell} \times \<(-1)^\ell\> \perp \frac{1}{2} \lb {2h \choose 2\ell}-{h \choose \ell}\rb \times\HP.
\]
\end{propo}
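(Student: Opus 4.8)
The plan is to mirror the computation in the proof of Proposition~\ref{P8}, departing only at the final combinatorial step, which is where the even case genuinely differs. First I would write $\phi \simeq h\times\<1\> \perp h\times\<-1\>$ and apply Proposition~\ref{S.P7.3} together with Proposition~\ref{S.P4.1} and the subsequent remark to obtain
\[
\kpow\phi \simeq \lperp_{i+j=k} {h\choose i}\times\<1\> \ox {h\choose j}\times\<(-1)^j\>.
\]
Since $k=2\ell$ is even, every term with $i+j=k$ satisfies $(-1)^j=(-1)^{k-i}=(-1)^i$, so the sign attached to the $(i,j)$-term is $\<1\>$ when $i$ is even and $\<-1\>$ when $i$ is odd. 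Collecting terms gives
\[
\kpow\phi \simeq P\times\<1\> \perp N\times\<-1\>, \quad P:=\sum_{i\textup{ even}}{h\choose i}{h\choose k-i},\ N:=\sum_{i\textup{ odd}}{h\choose i}{h\choose k-i}.
\]

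The key step, and the one point where the argument of Proposition~\ref{P8} does not carry over, is to evaluate $P-N$. Whereas for $k$ odd this difference vanishes, for $k$ even it does not. I would compute
\[
P-N = \sum_i (-1)^i {h\choose i}{h\choose k-i},
\]
which is the coefficient of $x^k$ in $(1-x)^h(1+x)^h=(1-x^2)^h$; for $k=2\ell$ this coefficient is $(-1)^\ell{h\choose\ell}$. Simultaneously $P+N=\sum_i {h\choose i}{h\choose k-i}={2h\choose k}$ by Vandermonde's identity, which also records that $\kpow\phi$ has the expected dimension ${2h\choose 2\ell}$.

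Finally I would read off the isometry by a short parity bookkeeping. If $\ell$ is even then $P-N={h\choose\ell}\ge 0$, so splitting off ${h\choose\ell}$ copies of $\<1\>$ leaves $N\times\<1\>\perp N\times\<-1\>\simeq N\times\HP$ with $N=\tfrac12\lb{2h\choose 2\ell}-{h\choose\ell}\rb$; if $\ell$ is odd then $N-P={h\choose\ell}\ge 0$ and symmetrically one is left with ${h\choose\ell}$ copies of $\<-1\>$ together with $P=\tfrac12\lb{2h\choose 2\ell}-{h\choose\ell}\rb$ hyperbolic planes. In either case the surviving anisotropic summand is ${h\choose\ell}\times\<(-1)^\ell\>$, which yields the claimed formula. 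The main obstacle is thus purely the alternating-sum identity and the care needed to match the surviving sign $(-1)^\ell$ to the parity of $\ell$; everything else reuses the bilinear-algebra machinery already set up for Proposition~\ref{P8}.
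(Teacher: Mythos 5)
Your proof is correct, but it takes a genuinely different route from the paper. The paper proves this by a double induction on $h$ and $\ell$: after checking the base cases $P(h,1)$ and $P(1,\ell)$, it peels off one hyperbolic plane, writes $\gL^{2n}(m\times\HP)\simeq\gL^{2n}((m-1)\times\HP)\perp\gL^{2n-1}((m-1)\times\HP)\ox\HP\perp\gL^{2n-2}((m-1)\times\HP)$ via Proposition~\ref{S.P7.3}, and combines the inductive hypotheses $P(m-1,n)$ and $P(m,n-1)$ with Proposition~\ref{P8} (which handles the middle, odd-index term) and Pascal-type binomial identities. You instead run the computation of Proposition~\ref{P8} verbatim up to the form $P\times\<1\>\perp N\times\<-1\>$ and then replace the vanishing of the alternating sum (which fails for even $k$) by the exact evaluation $P-N=\sum_i(-1)^i\binom{h}{i}\binom{h}{2\ell-i}=(-1)^\ell\binom{h}{\ell}$, read off as the coefficient of $x^{2\ell}$ in $(1-x)^h(1+x)^h=(1-x^2)^h$, together with $P+N=\binom{2h}{2\ell}$ from Vandermonde. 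Your parity bookkeeping at the end is right in both cases, and the final cancellation $a\times\<1\>\perp a\times\<-1\>\simeq a\times\HP$ is legitimate in characteristic not $2$. What your approach buys is a direct, induction-free argument that treats Propositions~\ref{P8} and~\ref{P9} uniformly, isolating the single combinatorial identity that distinguishes the even case; what the paper's approach buys is that it needs only Pascal's identity and the already-proved odd case, at the cost of a somewhat heavier two-parameter induction.
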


\begin{proof}
We use induction on $h$ and $\ell$. Let $P(h,\ell)$ be the statement in the proposition. $P(h, 1)$ is true for all $h$ since 
\begin{align*}
\gL^2\phi &\simeq 2{h \choose 2}\times \<1\> \perp {h \choose 1}^2 \times \<-1\> \\
&\simeq {h \choose 1}\times \<-1\> \perp \frac{1}{2}\lb {2h \choose 2}-{h \choose 1}\rb \times \HP.
\end{align*}
Consider $P(1,\ell)$. Here $\phi \simeq \HP \text{ and } \ell \in \{0,1\}$. Now 
\[
\gL^0\HP :=\<1\> \simeq {1 \choose 0}\times \<(-1)^0\> \perp \frac{1}{2}\lb {2 \choose 0}-{1 \choose 0}\rb \times \HP
\]
and 
\[
\gL^2\HP =\<-1\> \simeq {1 \choose 1}\times \<-1\> \perp \frac{1}{2}\lb {2 \choose 2}-{1 \choose 1}\rb \times \HP.
\]
So $P(1, \ell)$ is true.\\
{\it Inductive step}: Let $m, n$ be integers, $0 <m<h$. Assume $P(m, n-1)$ is true for $0 \le n-1 \le m$ or, equivalently,  $1 \le n \le m+1$. The case $n=m+1$ gives $\gL^{2(m+1)}:=\<0\>.$
Also, assume that $P(m-1, n)$ is true for $0 \le n \le m-1$. We prove $P(m, n)$ to be true for $0 \le n \le m$:
\begin{align*}
\gL^{2n}\phi &= \gL^{2n}\Big({m \times \HP}\Big) \\
&= \gL^{2n} \Big({(m-1) \times \HP \perp \HP}\Big)\\
&\simeq\lperp_{i+j=2n} \gL^i \big( {(m-1) \times \HP} \big) \ox \gL^j\HP\\
&\simeq\gL^{2n} \Big( (m-1) \times \HP \Big) \perp \gL^{2n-1} \Big( (m-1) \times \HP \Big) \ox \HP \perp \gL^{2n-2} \Big( (m-1) \times \HP \Big)\\
&\simeq {{m-1}\choose n} \times \<(-1)^n\> \perp \frac{1}{2}\lb {2(m-1)\choose 2n}-{{m-1}\choose n}\rb \times \HP \perp {{2(m-1)}  \choose {2n-1}}\times \HP \\
&\quad \perp {{m-1}\choose{n-1}}\times\<(-1)^n\> \perp \frac{1}{2}\lb {{2(m-1)}\choose{2(n-1)}}-{{m-1}\choose {n-1}}\rb \times \HP \\
&\simeq {m \choose n} \times \<(-1)^n\> \perp \frac{1}{2}\lb {2m \choose 2n}-{m \choose n}\rb \times \HP
\end{align*}
by $P(m-1,n)$, $P(m, n-1)$ and Proposition~\ref{P8}. 
\end{proof}

\begin{remark}
The above proposition has been proved for ordered fields in \cite[Proposition~11.8]{M}. The proof uses the signature of $\phi$ with respect to an ordering.
\end{remark}

\begin{remark}
In \cite{M} it was shown that when $K$ is an ordered field and $\phi$ a hyperbolic form then $\kpow \phi$ is hyperbolic if and only if $k$ is odd. Proposition \ref{P9}
 shows that this is not true for fields in general. For example, for a field $K$ containing $\sqrt{-1}$ and $\phi \simeq 4\times \HP$, we have $\gL^2\phi={4 \choose 1} \times \<-1\> \perp \frac{1}{2}\lb {8 \choose 2}-{4 \choose 1}\rb \times \HP = 14 \times \HP$.
 \end{remark}

\subsection{Some properties of binomial coefficients}

We use some properties of binomial coefficients in the subsequent section. Some of them are well-known, others not. All of them can be derived from first principles or by using identities to be found in \cite[Chapter 1]{R}, for example. We list the properties here:

\begin{align*}
&{{r}\choose {s}} + {{r}\choose{s-1}} = {{r+1}\choose{s}},\\\\
&{{r}\choose {s}} - {{r}\choose{s-1}} = \frac{r+1-2s}{r+1}{{r+1}\choose{s}},\\\\
&{{r}\choose {s}} + {{r}\choose{s-2}} = {{r+2}\choose{s}}-2{{r}\choose{s-1}},\\\\
&{{r}\choose {s}} - {{r}\choose{s-2}} = \frac{r+2-2s}{r+2}{{r+2}\choose{s}},\\\\
&{{r}\choose {s}} = \frac{r}{s}{{r-1}\choose{s-1}}.
\end{align*} 

\subsection{Computation of exterior powers of the trace form of a symbol algebra}

Let $K$ and $S$ be as in Section 1.
From Propositions \ref{P1}, \ref{P2} and \ref{P3}, we have that the trace form of $S$ is
\[
	T_S \simeq 
	\begin{cases}
		n \times \<1\> \perp \nslno \times \HP \simeq \<(-1)^\nlo\> \perp \nslo \times \HP, &\text{if $n$ is odd;}\\
		\<n\>\<1, a, b, (-1)^\frac{n}{2}ab\> \perp \nslfo \times \HP, &\text{if $n$ is even.}
	\end{cases}
\]
For the remainder of this section, we shall use ``$\hyp$" to denote an unspecified number of hyperbolic planes. Thus we may restate the computed trace form of $S$ as
\[
	T_S \simeq 
	\begin{cases}
		n \times \<1\> \perp \hyp \simeq \<(-1)^\nlo\> \perp \hyp, &\text{if $n$ is odd;}\\
		\<n\>\<1, a, b, (-1)^\frac{n}{2}ab\> \perp \hyp, &\text{if $n$ is even.}
	\end{cases}
\]

\begin{propo}\label{P10}
Let $n$ be odd and $k$ an integer such that $0 \le k \le n^2$. Then
\[
	\kpow T_S \simeq 
	\begin{cases}
		{\nslo \choose \frac{k-1}{2}} \times \<(-1)^\frac{k-1}{2}\> \perp \hyp, &\text{if $k$ is odd;}\\
		{\nslo \choose \frac{k}{2}}\times \<(-1)^\frac{k}{2}\> \perp \hyp, &\text{if $k$ is even.}
	\end{cases}
\]
\end{propo}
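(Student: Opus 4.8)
The plan is to feed the known shape of $T_S$ for odd $n$ into the additivity formula for exterior powers, Proposition~\ref{S.P7.3}, and then to invoke the hyperbolic computations of Propositions~\ref{P8} and~\ref{P9}. Combining Proposition~\ref{P1}\textit{(i)} with Propositions~\ref{P2} and~\ref{P3} (equivalently, the Corollary following Proposition~\ref{P3}), I would work with the clean representative
\[
T_S \simeq \<d\> \perp \nslo \times \HP, \qquad d:=(-1)^{\nlo},
\]
so that the anisotropic core of $T_S$ is a single one-dimensional form $\<d\>$ sitting on top of $h:=\nslo$ hyperbolic planes. The whole point of preferring this representative over $n\times\<1\>\perp\hyp$ is that it has a one-dimensional non-hyperbolic part, which will make the exterior-power sum almost trivial.

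First I would apply Proposition~\ref{S.P7.3} to the orthogonal sum $\<d\>\perp(h\times\HP)$. The decisive simplification is that $\<d\>$ is one-dimensional, so by Definition~\ref{D1} we have $\gL^i\<d\>=0$ for all $i\ge 2$, while $\gL^0\<d\>=\<1\>$ and $\gL^1\<d\>\simeq\<d\>$. Hence the sum over $i+j=k$ collapses to just two terms,
\[
\kpow T_S \simeq \kpow\big(h\times\HP\big)\ \perp\ \<d\>\ox\gL^{k-1}\big(h\times\HP\big).
\]
Next I would substitute the hyperbolic formulas: by Proposition~\ref{P8} an odd-indexed exterior power of $h\times\HP$ is hyperbolic, while by Proposition~\ref{P9} an even-indexed one $\gL^{2\ell}(h\times\HP)$ equals ${h\choose\ell}\times\<(-1)^\ell\>\perp\hyp$. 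Since scaling a hyperbolic plane keeps it hyperbolic, $\<d\>\ox\HP\simeq\HP$, so tensoring the $\<d\>$-term by $\<d\>$ preserves whichever part is hyperbolic; thus exactly one of the two surviving summands contributes a non-hyperbolic part, selected by the parity of $k$.

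When $k$ is even the anisotropic contribution comes from the first term $\kpow(h\times\HP)$ with $\ell=k/2$, while the second term is hyperbolic; this yields ${h\choose k/2}\times\<(-1)^{k/2}\>\perp\hyp$, exactly the asserted even-case formula, with no dependence on $d$. When $k$ is odd the first term is hyperbolic and the contribution instead comes from $\<d\>\ox\gL^{k-1}(h\times\HP)$ with $\ell=(k-1)/2$, producing ${h\choose(k-1)/2}\times\<d\cdot(-1)^{(k-1)/2}\>\perp\hyp$.

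The step I expect to require the most care is precisely this sign in the odd case. The factor $d=(-1)^{\nlo}$ is carried along by the tensor product, so the honest bookkeeping gives diagonal entry $(-1)^{\frac{n-1}{2}+\frac{k-1}{2}}$ rather than $(-1)^{\frac{k-1}{2}}$; the two agree exactly when $d$ is a square in $K$, that is, when $n\equiv 1\pmod 4$, and I would want to verify (or flag) whether $\<d\>\simeq\<1\>$ holds in general before asserting the stated form for $n\equiv 3\pmod 4$. Apart from this sign, the only remaining task is the dimension check: the total rank must be ${n^2\choose k}$, which one confirms directly from $\dim\kpow T_S={n^2\choose k}$ (or, if one instead runs the argument through the representative $n\times\<1\>\perp\hyp$ and Proposition~\ref{S.P4.1}, by collapsing the resulting sums $\sum_m(-1)^m{n\choose k-2m}{h'\choose m}$ with the binomial identities of the preceding subsection). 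Everything else is a direct substitution into Propositions~\ref{P8} and~\ref{P9}.
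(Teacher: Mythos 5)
Your argument follows the same route as the paper's: write $T_S \simeq \<(-1)^{\nlo}\> \perp \nslo\times\HP$, apply Proposition~\ref{S.P7.3} so that only the $j=0,1$ terms survive (the one-dimensional summand kills all higher exterior powers), and substitute Propositions~\ref{P8} and~\ref{P9}. The sign you flag in the odd-$k$ case is a genuine issue, and it lies in the paper's proof rather than in yours: the paper's computation silently replaces $\gL^{k-1}\lb\nslo\times\HP\rb\ox\<(-1)^{\nlo}\>$ by $\gL^{k-1}\lb\nslo\times\HP\rb$, dropping the scalar factor. Carrying it along as you do gives, for $k$ odd, ${\nslo\choose\frac{k-1}{2}}\times\<(-1)^{\frac{k-1}{2}+\nlo}\>\perp\hyp$, which coincides with the stated formula only when $(-1)^{\nlo}\in\Kdt$, i.e.\ when $n\equiv1\pmod4$ or $-1$ is a square in $K$. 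The discrepancy is real: for $n=3$, $k=1$ and $K=\Q(\omega)$ one has $\gL^1T_S=T_S\simeq\<3\>\perp4\times\HP\simeq\<-1\>\perp4\times\HP$ by Proposition~\ref{P2}, whereas the stated formula would give $\<1\>\perp\hyp$, and $-1\notin\Kdt$ here. The even-$k$ case is unaffected, as you note, since there the scaled summand is an odd exterior power of the hyperbolic part and scaling preserves hyperbolicity. So your proof is correct and complete once you state the odd-$k$ conclusion with the extra factor $(-1)^{\nlo}$; do not try to verify $\<(-1)^{\nlo}\>\simeq\<1\>$ in general, because it fails.
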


\begin{proof}
Let $k$ be odd. Then
\begin{align*}
	\kpow T_S &= \kpow \lb \nslo \times \HP \perp \<(-1)^\nlo \> \rb\\
	&\simeq \lperp_{i+j=k} \gL^i \lb \nslo \times \HP\rb \ox \gL^j\<(-1)^\nlo\>\\
	&\simeq \kpow \lb \nslo \times \HP \rb \perp \gL^{k-1} \lb \nslo \times \HP \rb \\
	&\simeq {\nslo \choose \frac{k-1}{2} } \times \<(-1)^{\frac{k-1}{2}}\> \perp \hyp.
\end{align*}
A similar computation for even $k$ yields the result.
\end{proof}

\begin{propo}\label{P11}
Let $n$ be even. We write $T_S \simeq q_S \perp m$ $\times$ $\HP$ where $q_S \simeq \<n\>\<1, a, b, (-1)^\frac{n}{2}ab\>$ and $m=\nslfo$. Then, for $0 \le k \le n^2$,
\[
	\kpow T_S \simeq 
	\begin{cases}
		{{m+1} \choose \frac{k-1}{2}} \times \<(-1)^\frac{n(k-1)}{4}\>q_S \perp \hyp, &\text{if $k$ is odd;}\\
		{\frac{n^2}{2} \choose \frac{k}{2}}\times \<1\> \perp \hyp, &\text{if $k$ is even and $n \equiv 0\pmod{4}$;}\\
		\lb 1-\frac{2k}{n^2}\rb{{\frac{n^2}{2}}\choose{\frac{k}{2}}} \times \<(-1)^\frac{k}{2}\> \perp \hyp, &\text{if $k$ is even, $k \le \frac{n^2}{2}$ and $n \equiv 2 \pmod{4}$;}\\	
		\lb \frac{2k}{n^2}-1\rb{{\frac{n^2}{2}}\choose{\frac{k}{2}}} \times \<(-1)^\frac{k+2}{2}\> \perp \hyp, &\text{if $k$ is even, $k>\frac{n^2}{2}$ and $n \equiv 2 \pmod{4}$.}
	\end{cases}
\]
\end{propo}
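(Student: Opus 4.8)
The plan is to expand $\gL^k T_S$ from the decomposition $T_S\simeq q_S\perp m\times\HP$ (with $m=\nslfo$) using Proposition~\ref{S.P7.3}:
\[
\gL^k T_S\simeq\lperp_{i+j=k}\gL^i q_S\ox\gL^j\lb m\times\HP\rb .
\]
Since $\dim_K q_S=4$ only the terms $i=0,1,2,3,4$ survive. First I would record the exterior powers of $q_S=\<n,na,nb,(-1)^{n/2}nab\>$ via Proposition~\ref{S.P4.1}: one gets $\gL^0 q_S=\<1\>$, $\gL^1 q_S=q_S$, $\gL^4 q_S=\<(-1)^{n/2}\>$ (the discriminant), and $\gL^3 q_S\simeq\<(-1)^{n/2}\>q_S$ (the general fact $\gL^{3}\varphi\simeq\<\disc\varphi\>\varphi$ for a $4$-dimensional $\varphi$). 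The key observation is that $\gL^2 q_S$ is hyperbolic in every case occurring here: for $n\equiv2\pmod4$ its diagonal entries pair off as $\<a,-a\>\perp\<b,-b\>\perp\<ab,-ab\>$, while for $n\equiv0\pmod4$ the field $K$ contains a primitive $4$th root of unity, so $-1\in\Kdt$ and each repeated entry $\<c,c\>$ is hyperbolic. Substituting Propositions~\ref{P8} and~\ref{P9} for $\gL^j\lb m\times\HP\rb$, each is a non-hyperbolic core ($0$ for $j$ odd and $\binom{m}{j/2}\times\<(-1)^{j/2}\>$ for $j$ even) perpendicular to hyperbolic planes. Because $\HP\ox\psi$ is hyperbolic for every $\psi$, all of these hyperbolic contributions, together with the $i=2$ term, collapse into ``$\hyp$'', leaving
\[
\gL^k T_S\simeq\lperp_{\substack{0\le i\le4\\ k-i\text{ even}}}\gL^i q_S\ox\lb\binom{m}{(k-i)/2}\times\<(-1)^{(k-i)/2}\>\rb\perp\hyp .
\]

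For $k$ odd only $i=1,3$ remain, so with $s=(k-1)/2$ this is $\binom{m}{s}\times\<(-1)^s\>q_S\perp\binom{m}{s-1}\times\<(-1)^{s-1+n/2}\>q_S\perp\hyp$. The two scalars $(-1)^s$ and $(-1)^{s-1+n/2}$ agree: for $n\equiv2\pmod4$ they are literally equal since $n/2$ is odd, and for $n\equiv0\pmod4$ both are trivial because $-1\in\Kdt$. Hence the two scaled copies of $q_S$ add, and Pascal's rule $\binom{m}{s}+\binom{m}{s-1}=\binom{m+1}{s}$ gives $\binom{m+1}{s}\times\<(-1)^{ns/2}\>q_S\perp\hyp$; as $(-1)^{ns/2}=(-1)^{n(k-1)/4}$ this is the stated odd-$k$ formula. (When $4\mid n$ one uses that $2\,q_S$ is hyperbolic to see the isometry holds with the unreduced coefficient $\binom{m+1}{s}$.)

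For $k$ even only $i=0,2,4$ remain, and the hyperbolic $\gL^2 q_S$ drops out; writing $t=k/2$ and using $\gL^4 q_S=\<(-1)^{n/2}\>$, the surviving core is $\binom{m}{t}\times\<(-1)^t\>\perp\binom{m}{t-2}\times\<(-1)^{t+n/2}\>\perp\hyp$. If $n\equiv0\pmod4$ then $-1\in\Kdt$ trivialises both scalars, and the identity $\binom{m}{t}+\binom{m}{t-2}=\binom{m+2}{t}-2\binom{m}{t-1}$ with $m+2=n^2/2$ shows the difference between the resulting $\<1\>$-count and $\binom{n^2/2}{t}$ is the even number $2\binom{m}{t-1}$, which is absorbed into $\hyp$ since $2\times\<1\>\simeq\HP$; this yields $\binom{n^2/2}{k/2}\times\<1\>\perp\hyp$. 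If $n\equiv2\pmod4$ then $n/2$ is odd, so the two scalars are opposite, the copies of $\<(-1)^t\>$ and $\<(-1)^{t+1}\>$ cancel in pairs, and the identity $\binom{m}{t}-\binom{m}{t-2}=\frac{m+2-2t}{m+2}\binom{m+2}{t}=\lb1-\tfrac{2k}{n^2}\rb\binom{n^2/2}{k/2}$ controls the survivor. For $k\le n^2/2$ this is non-negative, leaving a positive multiple of $\<(-1)^{k/2}\>$, and for $k>n^2/2$ it is non-positive, leaving a positive multiple of $\<(-1)^{(k+2)/2}\>$; these are the last two cases.

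The main obstacle, I expect, is the sign bookkeeping in the even-$k$, $n\equiv2\pmod4$ case: one must correctly propagate the scalar $(-1)^{n/2}$ from $\gL^3 q_S$ and $\gL^4 q_S$, recognise that the two families of one-dimensional forms are negatives of each other, and then split on the sign of $\binom{m}{t}-\binom{m}{t-2}$ to present the answer with honest positive multiplicities. Checking that $\binom{m}{t}$ dominates $\binom{m}{t-2}$ precisely when $k\le n^2/2$—matching the sign of $1-2k/n^2$—is the crux, and it is exactly the monotonicity encoded in the factor $\frac{m+2-2t}{m+2}$ of the fourth listed binomial identity.
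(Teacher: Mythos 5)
Your proposal is correct and follows essentially the same route as the paper: expand $\gL^k T_S$ via Proposition~\ref{S.P7.3} applied to $q_S \perp m\times\HP$, discard the terms involving odd exterior powers of $m\times\HP$ and the hyperbolic $\gL^2 q_S$, use $\gL^3 q_S \simeq \<(-1)^{n/2}\>q_S$ and $\gL^4 q_S \simeq \<(-1)^{n/2}\>$ together with Propositions~\ref{P8} and~\ref{P9}, and finish with the same binomial identities and the same case split on $n \bmod 4$ and on the sign of $\binom{m}{k/2}-\binom{m}{(k-4)/2}$. The only difference is that you make explicit (the hyperbolicity of $\gL^2 q_S$, the formula for $\gL^3$ of a four-dimensional form) what the paper leaves implicit.
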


\begin{proof}
{\it Case (i)}. Let $k$ be odd. Then 
\begin{align*}
	\kpow T_S &= \kpow (m \times \HP \perp q_S) \\
	&\simeq \gL^{k-1} (m \times \HP) \ox q_S \perp \gL^{k-3}(m \times \HP) \ox \gL^3 q_S \perp 	\hyp \\
	&\simeq {m \choose \frac{k-1}{2}} \times \<(-1)^\frac{k-1}{2}\> \ox q_S \perp {m \choose \frac{k-3}{2}} \times \<(-1)^\frac{k-3}{2}\> \ox \<(-1)^\frac{n}{2}\>q_S \perp \hyp.
\end{align*}
When $n \equiv 0 \pmod{4}$ we have $-1 \in \Kdt$ and so 
\begin{align*}
	\kpow T_S &\simeq \lb{m \choose \frac{k-1}{2}}+{m \choose \frac{k-3}{2}}\rb \times q_S \perp \hyp = {{m+1} \choose \frac{k-1}{2}} \times q_S \perp \hyp.
\end{align*}
On the other hand, when $n \equiv 2 \pmod{4}$,
\begin{align*}
	\kpow T_S &\simeq {m+1 \choose \frac{k-1}{2}} \times \<(-1)^\frac{k-1}{2}\>q_S \perp \hyp.
\end{align*}
Hence, for $n$ even and $k$ odd, we have
 \begin{align*}
	\kpow T_S &\simeq {\frac{n^2-2}{2} \choose \frac{k-1}{2}} \times \<(-1)^\frac{n(k-1)}{4}\>q_S  \perp \hyp.
\end{align*}

{\it Case (ii)}. Let $k$ be even. Then 
\begin{align*}
	\kpow T_S &= \kpow (m \times \HP \perp q_S) \\
	&\simeq \kpow(m \times \HP) \perp \gL^{k-2} (m \times \HP) \ox \gL^2 q_S \perp \gL^{k-4}(m \times \HP) \ox \gL^4 q_S \perp 	\hyp \\
	&\simeq {m \choose \frac{k}{2}} \times \<(-1)^\frac{k}{2}\> \perp {m \choose 	\frac{k-4}{2}} \times \<(-1)^\frac{n}{2}\>\<(-1)^\frac{k}{2}\> \perp \hyp.
\end{align*}

When $n \equiv 0 \pmod{4}$ we have $-1 \in \Kdt$ and so 
\begin{align*}
	\kpow T_S &\simeq \lb{m \choose \frac{k}{2}}+{m \choose \frac{k-4}{2}}\rb \times \<1\> \perp \hyp \\
	&= {{m+2} \choose \frac{k}{2}} \times \<1\> \perp \hyp \\
	&= {\frac{n^2}{2} \choose \frac{k}{2}} \times \<1\> \perp \hyp.
\end{align*}

On the other hand, when $n \equiv 2 \pmod{4}$ we have that $\<(-1)^\frac{n}{2}\> \simeq \<-1\>$ and so
\begin{align*}
	\kpow T_S \simeq 
	\begin{cases}
		\lb {m \choose \frac{k}{2}} - {m \choose \frac{k-4}{2}}\rb \times \<(-1)^\frac{k}{2}\> \perp \hyp, &\text{if $k \le \frac{n^2}{2}$;}\\
		\lb {m \choose \frac{k-4}{2}} - {m \choose \frac{k}{2}}\rb \times \<(-1)^\frac{k+2}{2}\> \perp \hyp, &\text{if $k > \frac{n^2}{2}$}\\
	\end{cases} \\
	= 
	\begin{cases}
		\lb 1-\frac{2k}{n^2}\rb {\frac{n^2}{2} \choose \frac{k}{2}} \times \<(-1)^\frac{k}{2}\> \perp \hyp, &\text{if $k \le \frac{n^2}{2}$;}\\
		\lb \frac{2k}{n^2} - 1\rb {\frac{n^2}{2} \choose \frac{k}{2}} \times \<(-1)^\frac{k+2}{2}\> \perp \hyp, &\text{if $k > \frac{n^2}{2}$}.\\
	\end{cases}
\end{align*}
\end{proof}

\begin{example}
By Proposition \ref{P11} with $n=4$ and  $k$ odd, we get $\kpow T_S \simeq {7 \choose \frac{k-1}{2}} \times q_S \perp \hyp$. Since $7 \choose l$ is odd for $0 \le l \le 7$, it follows that $\kpow T_S \simeq q_S \perp \hyp$ for $k$ odd, $1 \le k \le 15$. For $n=4$ and $k$ even we get $\kpow T_S \simeq {8 \choose \frac{k}{2}}\times \<1\> \perp \hyp$. Since $8 \choose l$ is even for $1 \le l \le 7$ it follows that $\kpow T_S$ is hyperbolic. These conclusions confirm \cite[Corollaire 2]{RST}.
\end{example}

\begin{remark}
In general, for $n \equiv 0 \pmod{4}$ and $k$ even, it is not true that $\kpow T_S$ is hyperbolic. For example, with $n=12$, $\gL^{16}T_S\simeq \<1\> \perp \hyp$.
\end{remark}

\begin{remark}
From Proposition \ref{P11} it follows that $\kpow T_S$ is hyperbolic for $n$ even and $k \in \{n, \frac{n^2}{2}\}$. For, when $n \equiv 0 \pmod{4}$ we have 
\begin{align*}
\gL^\frac{n^2}{2} T_S &\simeq {\frac{n^2}{2} \choose \frac{n^2}{4}} \times \<1\> \perp \hyp \\ 
&= 2{\frac{n^2-2}{2}\choose \frac{n^2-4}{4}} \times \<1\> \perp \hyp
\end{align*}
and when $n \equiv 2 \pmod{4}$ the result follows directly from the formula for $\kpow T_S$.
\end{remark}

\begin{remark}
It follows from Proposition \ref{P11} that $\gL^{n^2} T_S$ is anisotropic when $n$ is even. 
\end{remark}

\begin{remark}
As a consequence of Proposition \ref{P11}, $\kpow T_S$ is hyperbolic when $n \equiv 0 \pmod{4}$, $p$ is an odd prime divisor of $n$ and $k \in \{2, 4, 8, 2p, 4p, 8p\}$.
\end{remark}

\section{Acknowledgements}

I would like to thank David Lewis and Thomas Unger for their guidance in the preparation of this paper and for their many suggested improvements. I am also grateful to Science Foundation Ireland who funded this research under the Research Frontiers Programme (project no. 07/RFP/MATF191).

\end{document}